\documentclass[oneside,english]{amsart}
\usepackage[T1]{fontenc}
\usepackage{color,enumitem,graphicx}
\usepackage[latin9]{inputenc}
\usepackage{amsthm}
\usepackage{amstext}
\usepackage{amssymb}
\usepackage{esint}

\makeatletter


\numberwithin{equation}{section}
\numberwithin{figure}{section}
\theoremstyle{plain}
\newtheorem{thm}{\protect\theoremname}[section]
  \theoremstyle{plain}
  
  \theoremstyle{plain}
  
  \theoremstyle{plain}
  
  \theoremstyle{plain}
  \newtheorem{lem}[thm]{\protect\lemmaname}
  \theoremstyle{definition}

\makeatother

\usepackage{babel}
  \providecommand{\corollaryname}{Corollary}
  \providecommand{\definitionname}{Definition}
  \providecommand{\lemmaname}{Lemma}
  \providecommand{\propositionname}{Proposition}
  \providecommand{\examplename}{Example}
\providecommand{\theoremname}{Theorem}

\DeclareMathOperator{\dv}{div}

\DeclareMathOperator{\loc}{loc}

\DeclareMathOperator{\esssup}{esssup}

\begin{document}

\title{On $(p,q)$-eigenvalues of subelliptic operators on nilpotent Lie groups}

\author{Prashanta Garain and Alexander Ukhlov}

\begin{abstract}
In the article we study the Dirichlet $(p,q)$-eigenvalue problem for subelliptic non-commutative operators on nilpotent Lie groups. We prove solvability of this eigenvalue problem and existence of the minimizer of the corresponding variational problem. 
\end{abstract}

\maketitle

\footnotetext{{\bf Key words and phrases:} Subelliptic operators, Eigenvalue problem, Carnot groups.}
\footnotetext{\textbf{2010 Mathematics Subject Classification:} 35P30, 22E30, 46E35}

\section{Introduction }
In this article we consider the Dirichlet $(p,q)$-eigenvalue problem, $1<p<\nu$, $1<q<\nu^{\ast}=\nu p (\nu-p)$ for subelliptic non-commutative operators
\begin{equation}
\label{eq1}
-\textrm{div}_{\textrm{H}}\left(|\nabla_{\textrm{H}} u|^{p-2}\nabla_{\textrm{H}} u\right)=\lambda \|u\|^{p-q}_{L^q(\Omega)}|u|^{q-2}u\,\,
\text{in}\,\,\Omega,\,\, u=0\,\,\text{on}\,\,\partial\Omega,
\end{equation}
where $\Omega$ is a bounded domain on a stratified nilpotent Lie group $\mathbb G$.

In the commutative case $\mathbb G=\mathbb R^n$ the eigenvalue problem ($p=q=2$) arises to works by Lord Rayleigh \cite{R94} where the author established the variational formulation of this problem which is based on the Dirichlet integral
$$
\|u \|^2_{W_0^{1,2}(\Omega)}=\int\limits_{\Omega} |\nabla u|^2~dx.
$$

We note classical works \cite{PW61, PS51} devoted to eigenvalues of linear elliptic operators and its connections with problems of continuum mechanics. 

The non-linear commutative case $p = q\ne 2$ was investigated by many authors as a typical non-linear eigenvalue problem, see, for example, \cite{AA87}, for extensive references we refer to \cite{L08}. The case $p\ne q$ was considered in \cite{AA87,DKN,FL10,O88}.

Subelliptic eigenvalue problems were considered first in \cite{FPh81}. In the recent time eigenvalue problems for $p$-sub-Laplace operators were intensively studied, for example, in \cite{CC21,M13,WNL}.

In the present work we consider the Dirichlet $(p,q)$-eigenvalue problem (\ref{eq1}) in the weak formulation: a function $u$ solves the eigenvalue problem iff $u\in W^{1,p}_0(\Omega)$ and
\begin{equation}
\label{eq2}
\int\limits_{\Omega}|\nabla_{\textrm{H}} u|^{p-2}\nabla_{\textrm{H}}u\nabla_{\textrm{H}}v~dx=\lambda \|u\|^{p-q}_{L^q(\Omega)}\int\limits_{\Omega}|u|^{q-2}uv~dx
\end{equation}
for all $v\in W^{1,p}_0(\Omega)$.

In this case we refer to $\lambda$ as an eigenvalue and $u$ as the corresponding eigenfunction. 

We prove solvability of the Dirichlet $(p,q)$-eigenvalue problem \eqref{eq1}, see Theorem \ref{newthm} and Theorem \ref{subopthm1}. Indeed, in Theorem \ref{subopthm1}, we have considered the following minimizing problem given by
$$
\lambda=\inf\limits_{u\in W^{1,p}_0(\Omega):\,\|u\|_{L^q(\Omega)}=1} \int\limits_{\Omega}|\nabla_{\textrm{H}} u|^{p}~dx
$$
and proved existence of a function $v\in W^{1,p}_0(\Omega)$, $\|v\|_{L^q(\Omega)}=1$, such that
$$
\lambda=\int\limits_{\Omega}|\nabla_{\textrm{H}} v|^{p}~dx.
$$
Moreover, we observe that $v$ is an eigenfunction corresponding to $\lambda$ and its associated eigenfunctions are precisely the scalar multiple of those vectors at which $\lambda$ is reached. Finally, in Theorem \ref{regthm}, we establish some qualitative properties of the eigenfunctions of \eqref{eq1}.

\section{Nilpotent Lie groups and Sobolev spaces}

Recall that a stratified homogeneous group \cite{FS}, or, in another
terminology, a Carnot group \cite{Pa} is a~connected simply
connected nilpotent Lie group~ $\mathbb G$ whose Lie algebra~ $V$ is
decomposed into the direct sum~ $V_1\oplus\cdots\oplus V_m$ of
vector spaces such that $\dim V_1\geqslant 2$, $[V_1,\ V_i]=V_{i+1}$
for $1\leqslant i\leqslant m-1$ and $[V_1,\ V_m]=\{0\}$. Let
$X_{11},\dots,X_{1n_1}$ be left-invariant basis vector fields of
$V_1$. Since they generate $V$, for each $i$, $1<i\leqslant m$, one
can choose a basis $X_{ik}$ in $V_i$, $1\leqslant k\leqslant
n_i=\dim V_i$, consisting of commutators of order $i-1$ of fields
$X_{1k}\in V_1$. We identify elements $g$ of $\mathbb G$ with
vectors $x\in\mathbb R^N$, $N=\sum_{i=1}^{m}n_i$, $x=(x_{ik})$,
$1\leqslant i\leqslant m$, $1\leqslant k\leqslant n_i$ by means of
exponential map $\exp(\sum x_{ik}X_{ik})=g$. Dilations $\delta_t$
defined by the formula 
\begin{multline}
\nonumber
\delta_t x= (t^ix_{ik})_{1\leqslant i\leqslant m,\,1\leqslant k\leqslant n_j}\\
=(tx_{11},...,tx_{1n_1},t^2x_{21},...,t^2x_{2n_2},...,t^mx_{m1},...,t^mx_{mn_m}),
\end{multline}
are automorphisms of
$\mathbb G$ for each $t>0$. Lebesgue measure $dx$ on $\mathbb R^N$
is the bi-invariant Haar measure on~ $\mathbb G$ (which is generated
by the Lebesgue measure by means of the exponential map), and
$d(\delta_t x)=t^{\nu}~dx$, where the number
$\nu=\sum_{i=1}^{m}in_i$ is called the homogeneous dimension of the
group~$\mathbb G$. The measure $|E|$ of a measurable subset
$E$ of $\mathbb G$ is defined by
$
|E|=\int_E~dx.
$

Recall that a continuous map $\gamma: [a,b]\to\mathbb G$ is called a continuous curve on $\mathbb G$. This continuous curve is rectifiable if
$$
\sup\left\{\sum\limits_{k=1}^m|\left(\gamma(t_{k})\right)^{-1}\gamma(t_{k+1})|\right\}<\infty,
$$
where the supremum is taken over all partitions $a=t_1<t_2<...<t_m=b$ of the segment $[a,b]$.
The length $l(\gamma)$ of a rectifiable curve $\gamma:[a,b]\to\mathbb G$ can be calculated by the formula
$$
l(\gamma)=\int\limits_a^b {\left\langle \dot{\gamma}(t),\dot{\gamma}(t)\right\rangle}_0^{\frac{1}{2}}~dt=
\int\limits_a^b \left(\sum\limits_{i=1}^{n}|a_i(t)|^2\right)^{\frac{1}{2}}~dt
$$
where ${\left\langle \cdot,\cdot\right\rangle}_0$ is the inner product on $V_1$. The result of \cite{CH} implies that one can connect two arbitrary points $x,y\in \mathbb G$ by a rectifiable curve. The Carnot-Carath\'eodory distance $d(x,y)$ is the infimum of the lengths over all rectifiable curves with endpoints $x$ and $y$ in $\mathbb G$. The Hausdorff dimension of the metric space $\left(\mathbb G,d\right)$ coincides with the homogeneous dimension $\nu$ of the group $\mathbb G$.

\subsection{Sobolev spaces on Carnot groups}

Let $\mathbb G$ be a Carnot group with one-parameter dilatation
group $\delta_t$, $t>0$, and a homogeneous norm $\rho$, and let
$E$ be a measurable subset of $\mathbb G$. The Lebesgue space $L^p(E)$, $p\in [1,\infty]$, is the space of pth-power
integrable functions $f:E\to\mathbb R$ with the standard norm:
$$
\|f\|_{L^p(E)}=\biggl(\int\limits_{E}|f(x)|^p~dx\biggr)^{\frac{1}{p}},\,\,1\leq p<\infty,
$$
and $\|f\|_{L^{\infty}(E)}=\esssup_{E}|f(x)|$ for $p=\infty$. We
denote by $L^p_{\loc}(E)$ the space of functions
$f: E\to \mathbb R$ such that $f\in L^p(F)$ for each compact
subset $F$ of $E$.

Let $\Omega$ be an open set in $\mathbb G$. The (horizontal) Sobolev space
$W^{1,p}(\Omega)$, $1\leqslant p\leqslant\infty$, consists of the functions
$f:\Omega\to\mathbb R$ which are locally integrable in $\Omega$, having the weak
derivatives $X_{1i} f$ along the horizontal vector fields $X_{1i}$, $i=1,\dots,n_1$,
and the finite norm
$$
\|f\|_{W^{1,p}(\Omega)}=\|f\|_{L^p(\Omega)}+\|\nabla_{\textrm{H}} f\|_{L^p(\Omega)},
$$
where $\nabla_\textrm{H} f=(X_{11}f,\dots,X_{1n_1}f)$ is the horizontal subgradient of $f$.
If $f\in W^{1,p}(U)$ for each bounded open set $U$ such that
$\overline{U}\subset\Omega$ then we say that $f$ belongs to the
class $W^{1,p}_{\loc}(\Omega)$.

The Sobolev space $W^{1,p}_0(\Omega)$ is defined to be the closure of $C^{\infty}_c(\Omega)$ under the norm
$$
\|f\|_{W^{1,p}_0(\Omega)}=\|f\|_{L^p(\Omega)} +\|\nabla_{\textrm{H}} f\|_{L^p(\Omega)}.
$$
For the following result, refer to \cite{F75,Vodop, Vodop1, X90}. 
\begin{lem}
\label{Xuthm}
The space $W_0^{1,p}(\Omega)$ is a real  separable and uniformly convex Banach space.
\end{lem}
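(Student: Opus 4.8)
The plan is to prove each of the three asserted properties---that $W_0^{1,p}(\Omega)$ is a real Banach space, that it is separable, and that it is uniformly convex---in turn, reducing each claim to the corresponding well-known property of the Lebesgue space $L^p$ via the natural isometric embedding induced by the gradient.

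First I would set up the key device: the horizontal gradient map $T\colon f\mapsto(f,\nabla_{\textrm{H}}f)$ sending a function in $W^{1,p}(\Omega)$ to the tuple $(f,X_{11}f,\dots,X_{1n_1}f)\in L^p(\Omega)^{n_1+1}$. By the very definition of the norm $\|f\|_{W^{1,p}(\Omega)}=\|f\|_{L^p(\Omega)}+\|\nabla_{\textrm{H}}f\|_{L^p(\Omega)}$, the map $T$ is a linear isometry onto its image (up to the equivalence of the sum-norm with the standard product norm on $L^p(\Omega)^{n_1+1}$, which I would note at the outset). Completeness of $W_0^{1,p}(\Omega)$ would then follow by showing the image $T(W_0^{1,p}(\Omega))$ is a closed subspace of the complete product space $L^p(\Omega)^{n_1+1}$: given a Cauchy sequence $f_k$ in $W_0^{1,p}(\Omega)$, the sequences $f_k$ and $X_{1i}f_k$ are Cauchy in $L^p(\Omega)$, hence converge to limits $f$ and $g_i$; one then checks that $g_i=X_{1i}f$ in the weak sense, using that $X_{1i}f_k\to g_i$ in $L^p$ permits passing to the limit in the integration-by-parts identity $\int f_k\, X_{1i}^{\ast}\varphi = -\int (X_{1i}f_k)\varphi$ against test functions $\varphi\in C^\infty_c(\Omega)$. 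Finally, since $W_0^{1,p}$ is defined as the closure of $C^\infty_c(\Omega)$, the limit $f$ lies in $W_0^{1,p}(\Omega)$, so the space is complete, i.e.\ Banach.

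For separability, I would invoke that $L^p(\Omega)$ is separable for $1\le p<\infty$, hence so is the finite product $L^p(\Omega)^{n_1+1}$; a subspace of a separable metric space is separable, so $T(W_0^{1,p}(\Omega))$ and therefore $W_0^{1,p}(\Omega)$ itself is separable. For uniform convexity, I would again use the isometric embedding together with the uniform convexity of $L^p(\Omega)$ for $1<p<\infty$ (Clarkson's inequalities). The cleanest route is to transfer the relevant Clarkson-type inequality through $T$: since $T$ is an isometry onto a subspace and uniform convexity is inherited by subspaces of uniformly convex spaces, it suffices to know that $L^p(\Omega)^{n_1+1}$ is uniformly convex. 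Here one must be slightly careful, because the sum-norm I wrote is not literally the $\ell^p$-product norm under which the product is most transparently uniformly convex; so I would either re-norm $W_0^{1,p}$ by the equivalent $\ell^p$-type norm $\big(\|f\|_{L^p}^p+\|\nabla_{\textrm{H}}f\|_{L^p}^p\big)^{1/p}$ (equivalent norms give the same topological and Banach-space structure, and uniform convexity of a Banach space is what is asserted, which for this purpose is naturally established in the $\ell^p$ norm) or directly verify Clarkson's inequalities for the combined norm.

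The main obstacle I anticipate is not the functional-analytic transfer, which is standard, but the verification that the weak horizontal derivative is preserved under $L^p$-limits---that is, the closedness step. This requires knowing that the vector fields $X_{1i}$ have smooth (polynomial) coefficients so that the formal adjoint $X_{1i}^{\ast}$ is again a first-order operator with smooth coefficients, legitimizing the integration-by-parts identity and the passage to the limit against $C^\infty_c(\Omega)$ test functions; on a Carnot group the $X_{1i}$ are left-invariant and polynomial in the exponential coordinates, so this is clean, but it is the one place where the group structure genuinely enters. A secondary subtlety, already flagged, is the interplay between the sum-norm used in the definition and the $\ell^p$-norm needed for a painless uniform-convexity argument; I would resolve this explicitly by passing to the equivalent $\ell^p$ norm before quoting Clarkson's inequalities.
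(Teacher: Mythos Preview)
The paper does not supply its own proof of this lemma; it simply refers the reader to the literature (Folland, Vodop'yanov, Vodop'yanov--Chernikov, Xu). Your proposal therefore goes well beyond what the paper does, giving the standard and essentially correct argument via the isometric embedding $f\mapsto(f,\nabla_{\textrm{H}}f)$ into a finite product of $L^p$-spaces. The completeness and separability steps are fine, and your remark that the left-invariant fields $X_{1i}$ have polynomial coefficients so that the formal adjoint is well behaved is exactly the point where the Carnot-group structure is used.

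One caveat deserves a sharper treatment than you give it. You write that ``equivalent norms give the same topological and Banach-space structure,'' and offer as an alternative to ``directly verify Clarkson's inequalities for the combined norm.'' But uniform convexity is \emph{not} invariant under equivalent renorming, and the sum norm $\|f\|_{L^p}+\|\nabla_{\textrm{H}}f\|_{L^p}$ used in the paper's definition is the $\ell^1$-product of two $L^p$-norms, which is \emph{not} uniformly convex (already $\mathbb{R}^2$ with the $\ell^1$-norm fails). So the second alternative would not succeed, and the re-norming to $\bigl(\|f\|_{L^p}^p+\|\nabla_{\textrm{H}}f\|_{L^p}^p\bigr)^{1/p}$ (or, as the paper itself eventually does after Lemma~\ref{emb}, to the homogeneous norm $\|\nabla_{\textrm{H}}f\|_{L^p}$) is not optional but essential. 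With that norm the space embeds isometrically into $L^p(\Omega;\mathbb{R}^{n_1+1})$, which is uniformly convex by Clarkson, and the conclusion follows. This is a wrinkle in the paper's presentation as much as in yours, and for the applications downstream (reflexivity, the Ercole framework) the equivalent uniformly convex norm is what is actually being used.
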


The following embedding result follows from \cite[$(2.8)$]{Danielli} and \cite{F75}, \cite[Theorem $8.1$]{HK00}, see also \cite[Theorem $2.3$]{Garo1}. 

\begin{lem}\label{emb}
Let $\Omega\subset\mathbb G$ be a bounded domain and $1<p<\nu$. Then $W_0^{1,p}(\Omega)$ is continuously embedded in $L^q(\Omega)$ for every $1\leq q\leq \nu^{*}$ where $\nu^{\ast}={\nu p}/{(\nu-p)}$. Moreover, the embedding is compact for every $1\leq q<\nu^{*}$.
\end{lem}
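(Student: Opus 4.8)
The plan is to reduce everything to the single critical inequality
$\|u\|_{L^{\nu^{*}}(\Omega)}\le C\,\|\nabla_H u\|_{L^p(\Omega)}$ for $u\in C_c^\infty(\Omega)$, and then to obtain the full scale of continuous embeddings by Hölder's inequality and the compactness by a Rellich--Kondrachov type argument. The exponent $\nu^{*}=\nu p/(\nu-p)$ is forced by the dilation structure: since horizontal fields lie in $V_1$ and $d(\delta_t x)=t^{\nu}\,dx$, one computes $\|\nabla_H(u\circ\delta_t)\|_{L^p}=t^{1-\nu/p}\|\nabla_H u\|_{L^p}$ and $\|u\circ\delta_t\|_{L^q}=t^{-\nu/q}\|u\|_{L^q}$, so a scale-invariant inequality $\|u\|_{L^q}\le C\|\nabla_H u\|_{L^p}$ can hold only when $q=\nu^{*}$.

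To prove the inequality itself I would use the potential estimate $|u(x)|\le C\int_{\mathbb G}\frac{|\nabla_H u(y)|}{d(x,y)^{\nu-1}}\,dy$, obtained by integrating $u$ along horizontal curves and exploiting the Ahlfors $\nu$-regularity of $(\mathbb G,d)$ (balls of radius $r$ have measure comparable to $r^{\nu}$). The right-hand side is a Riesz potential of order $1$ on the space of homogeneous type $(\mathbb G,d,dx)$, and the Hardy--Littlewood--Sobolev inequality for such potentials yields $\|u\|_{L^{\nu^{*}}}\le C\|\nabla_H u\|_{L^p}$ for $1<p<\nu$. Density of $C_c^\infty(\Omega)$ in $W^{1,p}_0(\Omega)$ then extends the estimate to the whole space. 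For the continuous embedding into $L^q(\Omega)$ with $1\le q\le\nu^{*}$ I would invoke $|\Omega|<\infty$: Hölder's inequality gives $\|u\|_{L^q(\Omega)}\le|\Omega|^{1/q-1/\nu^{*}}\|u\|_{L^{\nu^{*}}(\Omega)}$, which together with the critical inequality and $\|\nabla_H u\|_{L^p(\Omega)}\le\|u\|_{W^{1,p}_0(\Omega)}$ produces $\|u\|_{L^q(\Omega)}\le C\|u\|_{W^{1,p}_0(\Omega)}$.

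The compactness for $1\le q<\nu^{*}$ is the main obstacle, since scaling alone cannot supply it and a genuinely analytic argument is needed. I would first establish compactness of $W^{1,p}_0(\Omega)\hookrightarrow L^p(\Omega)$ by verifying total boundedness through a Fréchet--Kolmogorov criterion adapted to the group. Extending functions by zero, tightness is automatic on the bounded set $\Omega$; the essential point is the uniform smallness of group translates, $\|u(g^{-1}\cdot)-u(\cdot)\|_{L^p}\le C\,d(e,g)\,\|\nabla_H u\|_{L^p}$, which is equivalent to the $(1,p)$-Poincaré inequality available on $\mathbb G$. Regularizing by group convolution $u_\varepsilon=u*\phi_\varepsilon$ with $\phi_\varepsilon$ supported in a $\delta_\varepsilon$-ball, this translate bound shows $\|u_\varepsilon-u\|_{L^p}\to0$ uniformly over the unit ball of $W^{1,p}_0(\Omega)$, while $\{u_\varepsilon\}$ is precompact by uniform pointwise bounds and equicontinuity via Arzelà--Ascoli; total boundedness of the image follows.

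Having compactness into $L^p(\Omega)$, I would upgrade to every $1\le q<\nu^{*}$ by interpolation. Given a bounded sequence $\{u_k\}$ in $W^{1,p}_0(\Omega)$, pass to a subsequence converging in $L^p(\Omega)$; by the continuous embedding it is also bounded in $L^{\nu^{*}}(\Omega)$. For $p<q<\nu^{*}$ the log-convexity of $L^q$-norms gives $\|u\|_{L^q}\le\|u\|_{L^p}^{\theta}\|u\|_{L^{\nu^{*}}}^{1-\theta}$ with $\tfrac1q=\tfrac{\theta}{p}+\tfrac{1-\theta}{\nu^{*}}$, so the subsequence is Cauchy in $L^q(\Omega)$; for $q\le p$ finite measure reduces directly to the $L^p$-convergence already obtained. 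This yields the compact embedding for all $1\le q<\nu^{*}$ and completes the argument.
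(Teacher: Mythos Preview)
The paper does not supply a proof of this lemma at all; it simply cites \cite[(2.8)]{Danielli}, \cite{F75}, \cite[Theorem~8.1]{HK00} and \cite[Theorem~2.3]{Garo1}. Your sketch is a correct reconstruction of exactly the arguments that appear in those references: the critical inequality $\|u\|_{L^{\nu^{*}}}\le C\|\nabla_H u\|_{L^p}$ via the Riesz-potential representation and the Hardy--Littlewood--Sobolev inequality on the homogeneous space $(\mathbb G,d,dx)$ is Folland's proof in \cite{F75}; the compactness via the $(1,p)$-Poincar\'e inequality, group-convolution mollification, and the Fr\'echet--Kolmogorov criterion is the route taken in \cite{HK00} and \cite{Garo1}; and the interpolation step to pass from $L^p$-compactness to all $1\le q<\nu^{*}$ is standard. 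In short, you have filled in what the paper outsources, and your argument matches the cited literature.
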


Hence, in the case $1<p<\nu$ we can consider the Sobolev space $W_0^{1,p}(\Omega)$ with the norm
$$
\|f\|_{W^{1,p}_0(\Omega)}=\|\nabla_{\textrm{H}} f\|_{L^p(\Omega)}.
$$

\section{Dirichlet $(p,q)$-eigenvalue problems}

The system of basis vectors $X_1,X_2,\dots,X_n$ of the space $V_1$ (here and throughout we set $n_1=n$ and $X_{1i}=X_i$, where
$i=1,\dots,n$) satisfies the H\"ormander's hypoellipticity condition. We study the non-linear eigenvalue problem in the geometry of the vector fields  satisfying the H\"ormander's hypoellipticity condition.

Let $1<p<\nu$, $\lambda\in\mathbb R$ and consider the following subelliptic equation
\begin{equation}\label{subopeqn}
-\dv_{\textrm{H}}(|\nabla_{\textrm{H}} u|^{p-2} \nabla_{\textrm{H}} u)=\lambda\|u\|_{L^q(\Omega)}^{p-q}|u|^{q-2}u\text{ in }\Omega,\quad u=0\text{ on }\partial\Omega,
\end{equation}
where $1<q<\nu^*=\frac{\nu p}{\nu-p}$. We say that $(\lambda,u)\in \mathbb R\times W_0^{1,p}(\Omega)\setminus\{0\}$ is an eigenpair of \eqref{subopeqn} if for every $v\in W_0^{1,p}(\Omega)$, we have
\begin{equation}\label{subopwksol}
\int_{\Omega}|\nabla_{\textrm{H}}u|^{p-2}\nabla_{\textrm{H}} u \nabla_{\textrm{H}}v\,dx=\lambda\|u\|_{L^q(\Omega)}^{p-q}\int_{\Omega}|u|^{q-2}u v\,dx.
\end{equation}
Moreover, we refer to $\lambda$ as an eigenvalue and $u$ as the corresponding eigenfunction. 

Let us consider the following minimizing problem given by
\begin{equation}\label{subopmin}
\lambda:=\inf\left\{\int_{\Omega}|\nabla_{\textrm{H}}u|^p\,dx:v\in W_0^{1,p}(\Omega)\cap S{L^q(\Omega)}\right\},
\end{equation}
where
\begin{equation}\label{sl}
S{L^q(\Omega)}:=\{v\in L^q(\Omega):\|v\|_{L^q(\Omega)}=1\}.
\end{equation}

Let us define 
\begin{equation}\label{u}
(U,\|\cdot\|_U)=(W_0^{1,p}(\Omega),\|\cdot\|_{W_0^{1,p}(\Omega)})
\end{equation}

and
\begin{equation}\label{v}
(V,\|\cdot\|_V)=(L^q(\Omega),\|\cdot\|_{L^q(\Omega)})
\end{equation}
and we denote by $SV=S{L^q(\Omega)}$, where $SL^{q}(\Omega)$ is defined in \eqref{sl}. By Lemma \ref{Xuthm} and Lemma \ref{emb}, it follows that $(U,\|\cdot\|_U)$ is a uniformly convex Banach space, which is compactly embedded in the Banach space $(V,\|\cdot\|_V)$.
Define the operators
\begin{equation}\label{a}
A:U\to U^*\text{ by } \langle Av,w\rangle=\langle \dv\left(\nabla_{\textrm{H}}v|^{p-2}\nabla_{\textrm{H}}v\right), w\rangle=\int_{\Omega}|\nabla_{\textrm{H}}v|^{p-2}\nabla_{\textrm{H}}v \nabla_{\textrm{H}}w\,dx
\end{equation}
and
\begin{equation}\label{b}
B:V\to V^*\text{ by }\langle B(v),w\rangle=\int_{\Omega}|v|^{q-2}vw\,dx.
\end{equation}

Below, we prove the following properties of $A$ and $B$.
\begin{lem}\label{auxlmab}
\begin{enumerate}
\item[$(H_1)$] $A(tv)=|t|^{p-2}tA(v)\quad\forall t\in\mathbb{R}\quad \text{and}\quad\forall v\in U$,
\item[$(H_2)$] $B(tv)=|t|^{q-2}tB(v)\quad\forall t\in\mathbb{R}\quad \text{and}\quad\forall v\in V$,
\item[$(H_3)$] $\langle A(v),w\rangle\leq\|v\|_U^{p-1}\|w\|_U$ for all $v,w\in U$, where the equality holds if and only if $v=0$ or $w=0$ or $v=t w$ for some $t>0$.
\item[$(H_4)$] $\langle B(v),w\rangle\leq\|v\|_V^{q-1}\|w\|_V$ for all $v,w\in V$, where the equality holds if and only if $v=0$ or $w=0$ or $v=t w$ for some $t\geq 0$.
\item[$(H_5)$] For every $w\in V\setminus\{0\}$ there exists $u\in U\setminus\{0\}$ such that
$$
\langle A(u),v\rangle=\langle B(w),v\rangle\quad\forall\quad v\in U.
$$
\end{enumerate}
Moreover, $A$ and $B$ are continuous.
\end{lem}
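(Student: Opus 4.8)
The plan is to dispatch the five items in increasing order of difficulty, starting from the algebraic homogeneity relations and finishing with the existence statement $(H_5)$, which I expect to be the crux. For $(H_1)$ and $(H_2)$ I would simply insert the scalings $\nabla_{\textrm{H}}(tv)=t\,\nabla_{\textrm{H}}v$ and $|tv|^{q-2}(tv)=|t|^{q-2}t\,|v|^{q-2}v$ into the defining integrals \eqref{a} and \eqref{b}; pulling the scalar $|t|^{p-2}t$ (respectively $|t|^{q-2}t$) outside the integral yields the claimed identities tested against every $w$, hence as elements of $U^{*}$ (resp.\ $V^{*}$).

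For the bounds $(H_3)$ and $(H_4)$ I would first apply the pointwise estimate $|\nabla_{\textrm{H}}v|^{p-2}\nabla_{\textrm{H}}v\cdot\nabla_{\textrm{H}}w\le|\nabla_{\textrm{H}}v|^{p-1}|\nabla_{\textrm{H}}w|$, then Hölder's inequality with exponents $p/(p-1)$ and $p$, using $(p-1)\cdot\frac{p}{p-1}=p$, to obtain $\langle A(v),w\rangle\le\|v\|_U^{p-1}\|w\|_U$; the argument for $(H_4)$ is identical with $q$ in place of $p$. The equality analysis is where care is needed. Equality forces simultaneously equality in the pointwise step, i.e.\ $\nabla_{\textrm{H}}v$ and $\nabla_{\textrm{H}}w$ are nonnegative pointwise multiples of one another, and equality in Hölder, i.e.\ $|\nabla_{\textrm{H}}v|^{p}$ and $|\nabla_{\textrm{H}}w|^{p}$ are proportional; together these give $\nabla_{\textrm{H}}w=c\,\nabla_{\textrm{H}}v$ a.e.\ with $c\ge 0$. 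Since $\Omega$ is connected and $v-tw\in W_0^{1,p}(\Omega)$ (with $t=1/c$) has vanishing horizontal gradient, it must be the zero constant, so $v=tw$, with $t>0$ precisely when $v\neq 0$. For $(H_4)$ there is no gradient and the two conditions reduce directly to $|v|=t|w|$ together with $vw\ge 0$ a.e., which is exactly $v=tw$ for some $t\ge 0$.

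The heart of the lemma is $(H_5)$. Here I would solve the nonlinear equation $A(u)=B(w)$ in $U^{*}$, where $B(w)$ is regarded as an element of $U^{*}$ through the continuous embedding $U\hookrightarrow V$ of Lemma \ref{emb}. The cleanest route is the direct method: the functional
\[
J(u)=\frac{1}{p}\int_{\Omega}|\nabla_{\textrm{H}}u|^{p}\,dx-\langle B(w),u\rangle
\]
is strictly convex, coercive on $U$ (the $L^p$-growth of the first term dominates the linear functional since $p>1$), and weakly lower semicontinuous by convexity together with the uniform convexity recorded in Lemma \ref{Xuthm}; hence $J$ attains a unique minimizer $u$, whose Euler--Lagrange equation is exactly $\langle A(u),v\rangle=\langle B(w),v\rangle$ for all $v\in U$. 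Equivalently one may invoke the Browder--Minty theorem, $A$ being strictly monotone, coercive and continuous, to conclude that $A\colon U\to U^{*}$ is a bijection. Finally $u\neq 0$: were $u=0$ we would have $\langle B(w),v\rangle=0$ for all $v\in U$, and since $C_c^{\infty}(\Omega)\subset U$ is dense in $V=L^q(\Omega)$ this forces $|w|^{q-2}w=0$, i.e.\ $w=0$, contradicting $w\in V\setminus\{0\}$.

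The continuity of $A$ and $B$ follows from the continuity of the Nemytskii operators $\xi\mapsto|\xi|^{p-2}\xi$ and $s\mapsto|s|^{q-2}s$ on $L^p$ and $L^q$: convergence $\nabla_{\textrm{H}}u_n\to\nabla_{\textrm{H}}u$ in $L^p$ gives $|\nabla_{\textrm{H}}u_n|^{p-2}\nabla_{\textrm{H}}u_n\to|\nabla_{\textrm{H}}u|^{p-2}\nabla_{\textrm{H}}u$ in $L^{p/(p-1)}$, whence $A(u_n)\to A(u)$ in $U^{*}$ by Hölder's inequality, and analogously for $B$. The main obstacle is $(H_5)$, specifically verifying coercivity and weak lower semicontinuity of $J$ (equivalently, monotonicity and coercivity of $A$) rigorously in the subelliptic geometry, which rests on the uniform convexity and compact embedding of Lemmas \ref{Xuthm} and \ref{emb}.
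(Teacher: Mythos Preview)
Your proposal is correct and follows essentially the same outline as the paper: $(H_1)$--$(H_2)$ by definition, $(H_3)$--$(H_4)$ via the pointwise Cauchy--Schwarz step followed by H\"older with the same equality analysis, and $(H_5)$ by a surjectivity argument for $A$. The only differences are cosmetic: for $(H_5)$ you lead with the direct method on the functional $J$ and mention Browder--Minty as an alternative, whereas the paper goes straight to Browder--Minty (its Theorem~\ref{MB}) after checking boundedness, continuity, coercivity and monotonicity of $A$; and for continuity you invoke the Nemytskii map $\xi\mapsto|\xi|^{p-2}\xi$ to get norm convergence $A(u_n)\to A(u)$ in $U^{*}$, which is in fact stronger than the paper's argument, where only $\langle A(u_n),w\rangle\to\langle A(u),w\rangle$ for each fixed $w$ is established via pointwise-a.e.\ subsequences and weak $L^{p/(p-1)}$ limits.
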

By the property $(H_5)$ in Lemma \ref{auxlmab}, as in \cite[page $579$ and page $584-585$]{Ercole} for every $w_0\in SV$ there exists a sequence $\{w_n\}_{n\in\mathbb{N}}\subset U\cap SV$ such that 
\begin{equation}\label{its}
\langle A(w_{n+1},v\rangle=\mu_n\langle B(w_n),v\rangle\quad\forall\quad v\in U,
\end{equation}
where $\mu_n \geq \lambda$, with $\lambda$ given by \eqref{subopmin}.
\subsection{Main results}
The main results in this section are stated as follows:

\begin{thm}\label{newthm}
Let $1<p<\nu$ and $1<q<\nu^{*}$. Then, the sequences $\{\mu_n\}_{n\in\mathbb{N}}$ and $\{\|w_{n+1}\|_U^{p}\}_{n\in\mathbb{N}}$ given by \eqref{its} are nonincreasing and converge to the same limit $\mu$, which is bounded below by $\lambda$. Moreover, $\mu$ is an eigenvalue of \eqref{subopeqn} and there exists a subsequence $\{n_j\}_{j\in\mathbb{N}}$ such that both $\{w_{n_j}\}_{j\in\mathbb{N}}$ and $\{w_{n_{j+1}}\}_{j\in\mathbb{N}}$ converges in $U$ to the same limit $w\in U\cap SV$, which is an eigenvector corresponding to $\mu$.  
\end{thm}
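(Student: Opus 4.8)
The plan is to prove Theorem \ref{newthm} by analyzing the iterative scheme \eqref{its}, following the general abstract framework developed in \cite{Ercole} for operators satisfying $(H_1)$--$(H_5)$. The key observation is that the scheme is designed so that $\mu_n$ and $\|w_{n+1}\|_U^p$ are linked by testing \eqref{its} against suitable vectors, and monotonicity will come from exploiting the homogeneity properties $(H_1)$, $(H_2)$ together with the sharp inequalities $(H_3)$, $(H_4)$.

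First I would establish the two basic identities. Testing \eqref{its} with $v = w_{n+1}$ and using that $w_{n+1} \in SV$ together with the definition of $A$ gives $\|w_{n+1}\|_U^p = \langle A(w_{n+1}), w_{n+1}\rangle = \mu_n \langle B(w_n), w_{n+1}\rangle$. By $(H_4)$ and $\|w_n\|_V = \|w_{n+1}\|_V = 1$ we get $\langle B(w_n), w_{n+1}\rangle \le 1$, whence $\|w_{n+1}\|_U^p \le \mu_n$. Conversely, testing \eqref{its} with $v = w_n$ and applying $(H_3)$ to the left side yields $\mu_n = \mu_n\langle B(w_n), w_n\rangle = \langle A(w_{n+1}), w_n\rangle \le \|w_{n+1}\|_U^{p-1}\|w_n\|_U$. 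Combining these bounds, after an application of $(H_3)$ in the form relating consecutive norms, I would derive $\|w_{n+1}\|_U \le \|w_n\|_U$ and $\mu_{n+1}\le \mu_n$; this gives that both sequences are nonincreasing. Since $\mu_n \ge \lambda > 0$ by construction and $\|w_{n+1}\|_U^p \ge \lambda$ as well (from $\|w_{n+1}\|_U^p \ge \lambda$ following the variational characterization \eqref{subopmin}), both sequences are bounded below, hence convergent; sandwiching $\|w_{n+1}\|_U^p \le \mu_n \le \|w_{n+1}\|_U^{p-1}\|w_n\|_U$ and passing to the limit forces the two limits to coincide at a common value $\mu \ge \lambda$.

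The main obstacle, and the heart of the theorem, is the passage to the limit to produce an actual eigenvector. The sequence $\{w_{n+1}\}$ is bounded in $U = W_0^{1,p}(\Omega)$, so by reflexivity (Lemma \ref{Xuthm}) a subsequence $w_{n_j+1} \rightharpoonup w$ weakly in $U$, and by the compact embedding (Lemma \ref{emb}) $w_{n_j+1} \to w$ strongly in $V = L^q(\Omega)$, so $w \in SV$. The delicate point is upgrading weak convergence in $U$ to strong convergence and identifying $w$ as an eigenvector. I would exploit uniform convexity of $U$: one shows $\|w\|_U^p = \mu$ by weak lower semicontinuity together with the reverse bound coming from the scheme, so $\|w_{n_j+1}\|_U \to \|w\|_U$; combined with weak convergence in a uniformly convex space this forces strong convergence $w_{n_j+1}\to w$ in $U$. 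A parallel argument, using the monotonicity to control the gap between consecutive terms, yields $w_{n_j} \to w$ as well. Finally, I would pass to the limit in \eqref{its} using continuity of $A$ and $B$ (the last assertion of Lemma \ref{auxlmab}), obtaining $\langle A(w), v\rangle = \mu \langle B(w), v\rangle$ for all $v \in U$, which by the homogeneity $(H_1)$--$(H_2)$ is exactly the weak formulation \eqref{subopwksol} of the eigenvalue equation with eigenvalue $\mu$. Thus $w$ is an eigenvector corresponding to $\mu$, completing the proof.
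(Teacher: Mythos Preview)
Your proposal is correct and follows exactly the paper's approach: the paper's proof consists solely of invoking Lemma \ref{auxlmab} (which verifies $(H_1)$--$(H_5)$) together with \cite[Theorem 1]{Ercole}, and your sketch is precisely an unpacking of that abstract result in the present setting.
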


\begin{thm}\label{subopthm1}
Let $1<p<\nu$ and $1<q<\nu^{*}$. Suppose $\{u_n\}_{n\in\mathbb{N}}\subset U\cap SV$ is a minimizing sequence for $\lambda$, that is $\|u_n\|_V=1$ and $\|u_n\|_U^{p}\to\lambda$. Then there exists a subsequence $\{u_{n_j}\}_{j\in\mathbb{N}}$ which converges weakly in $U$ to $u\in U\cap SV$ such that  
$$
\lambda=\int_{\Omega}|\nabla_{\textrm{H}}~u|^p\,dx.
$$
Moreover, $u$ is an eigenfunction corresponding to $\lambda$ and its associated eigenfunctions are precisely the scalar multiple of those vectors at which $\lambda$ is reached.
\end{thm}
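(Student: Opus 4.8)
The plan is to run the direct method of the calculus of variations, using the compact embedding of Lemma~\ref{emb} to transmit the constraint $\|u\|_V=1$ to the limit, and then to extract the weak formulation \eqref{subopwksol} from a constrained first variation. First I would establish weak compactness: since $\{u_n\}$ is minimizing, $\|u_n\|_U^p\to\lambda$, so $\{u_n\}$ is bounded in $U=W_0^{1,p}(\Omega)$. By Lemma~\ref{Xuthm} the space $U$ is uniformly convex, hence reflexive, so a subsequence $\{u_{n_j}\}$ converges weakly in $U$ to some $u\in U$. Because $1<q<\nu^{*}$, the embedding $U\hookrightarrow V=L^q(\Omega)$ is compact (Lemma~\ref{emb}), so along a further subsequence $u_{n_j}\to u$ strongly in $V$; in particular $\|u\|_V=\lim_j\|u_{n_j}\|_V=1$, whence $u\in U\cap SV$ and $u\neq0$.

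Next I would use weak lower semicontinuity. The functional $u\mapsto\|u\|_U^p=\int_\Omega|\nabla_{\textrm{H}}u|^p\,dx$ is convex and strongly continuous on $U$, hence weakly lower semicontinuous, so
$$
\int_\Omega|\nabla_{\textrm{H}}u|^p\,dx=\|u\|_U^p\le\liminf_j\|u_{n_j}\|_U^p=\lambda.
$$
Since $u\in U\cap SV$, the definition \eqref{subopmin} gives the reverse inequality $\lambda\le\|u\|_U^p$, so $\lambda=\int_\Omega|\nabla_{\textrm{H}}u|^p\,dx$ and $u$ is a minimizer. To show $u$ is an eigenfunction, I would fix $v\in U$ and set $u_t=(u+tv)/\|u+tv\|_V$, which lies in $U\cap SV$ for $t$ near $0$ since $u\neq0$. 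As $u$ minimizes $\|\cdot\|_U^p$ on $U\cap SV$, the scalar map $t\mapsto\|u_t\|_U^p$ is minimized at $t=0$; differentiating and setting the derivative to zero yields $\langle A(u),v\rangle=\Lambda\langle B(u),v\rangle$ for a Lagrange multiplier $\Lambda$. Testing with $v=u$ and using $\langle A(u),u\rangle=\|u\|_U^p=\lambda$ and $\langle B(u),u\rangle=\|u\|_V^q=1$ identifies $\Lambda=\lambda$, and since $\|u\|_V^{p-q}=1$ this is exactly \eqref{subopwksol} with eigenvalue $\lambda$.

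For the characterization of the associated eigenfunctions, I would exploit the homogeneities $(H_1)$ and $(H_2)$ of Lemma~\ref{auxlmab}. A direct computation shows that \eqref{subopwksol} with eigenvalue $\lambda$ is invariant under $u\mapsto tu$, $t\neq0$: the factor $|t|^{p-2}t$ produced by $A$ matches the product of the factor $|t|^{q-2}t$ produced by $B$ with the factor $|t|^{p-q}$ from $\|tu\|_V^{p-q}$. Hence every nonzero scalar multiple of the minimizer $u$ is again an eigenfunction for $\lambda$. Conversely, if $w$ is any eigenfunction for $\lambda$, normalizing $\tilde w=w/\|w\|_V\in SV$ (which again solves \eqref{subopwksol} for $\lambda$ by the scaling invariance) and testing with $v=\tilde w$ gives $\|\tilde w\|_U^p=\lambda\|\tilde w\|_V^{p-q}\|\tilde w\|_V^q=\lambda$, so $\tilde w$ attains the infimum \eqref{subopmin} and $w$ is a scalar multiple of a minimizer. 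This identifies the eigenfunctions for $\lambda$ as precisely the nonzero scalar multiples of the minimizers.

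The step I expect to be most delicate is the derivation of the Euler--Lagrange equation: one must justify the differentiability at $t=0$ of the maps $t\mapsto\int_\Omega|\nabla_{\textrm{H}}(u+tv)|^p\,dx$ and $t\mapsto\|u+tv\|_V$ (the latter requiring $u\neq0$, which is guaranteed by $\|u\|_V=1$), and verify that the constraint is nondegenerate so the Lagrange multiplier rule applies. The remaining compactness, lower-semicontinuity, and scaling arguments are routine once Lemmas~\ref{Xuthm}, \ref{emb} and~\ref{auxlmab} are in hand.
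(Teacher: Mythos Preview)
Your argument is correct and essentially self-contained: reflexivity of $U$ gives weak compactness, the compact embedding $U\hookrightarrow V$ transfers the constraint $\|u\|_V=1$ to the limit, weak lower semicontinuity of the norm yields $\|u\|_U^p=\lambda$, a first variation along the constraint produces the Euler--Lagrange relation \eqref{subopwksol}, and the homogeneity check is exactly the right way to identify the eigenfunctions of $\lambda$ with the scalar multiples of the minimizers.

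The paper takes a different route in presentation: rather than carrying out the direct method explicitly, it first verifies the abstract hypotheses $(H_1)$--$(H_5)$ for the operators $A$ and $B$ (Lemma~\ref{auxlmab}) and then simply invokes \cite[Proposition~2]{Ercole}, which handles the existence of the minimizer and its identification as an eigenfunction in a general Banach-space framework. Your approach is more elementary and transparent---the reader sees exactly where compactness, lower semicontinuity, and the Lagrange multiplier enter---whereas the paper's approach buys brevity and uniformity with Theorem~\ref{newthm} (both being consequences of the same abstract machinery once $(H_1)$--$(H_5)$ are checked). In substance the two arguments coincide; what Ercole's proposition encapsulates is precisely the direct-method computation you have written out.
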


Our final main result concerns the following qualitative properties of the eigenfunctions of \eqref{subopeqn}.
\begin{thm}\label{regthm}
Let $1<p<\nu$ and $1<q<\nu^{*}$. Assume that $\lambda>0$ is an eigenvalue of the problem \eqref{subopeqn} and $u\in U\setminus\{0\}$ is a corresponding eigenfunction. Then (i) $u\in L^\infty(\Omega)$. (ii) Moreover, if $u\in U\setminus\{0\}$ is nonnegative in $\Omega$, then $u>0$ in $\Omega$. Further, for every $\omega\Subset\Omega$ there exists a positive constant $c$ depending on $\omega$ such that $u\geq c>0$ in $\omega$.
\end{thm}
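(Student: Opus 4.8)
The plan is to prove the two assertions (i) boundedness and (ii) the strong maximum principle / Harnack-type positivity separately, treating the eigenfunction $u$ as a weak subsolution and supersolution of a subelliptic $p$-Laplace-type equation with a bounded right-hand side.

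\smallskip

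For part (i), I would first rewrite the eigenvalue equation \eqref{subopeqn} in the form $-\dv_{\textrm{H}}(|\nabla_{\textrm{H}} u|^{p-2}\nabla_{\textrm{H}} u)=f$ where $f=\lambda\|u\|_{L^q(\Omega)}^{p-q}|u|^{q-2}u$. The key structural point is that, since $1<q<\nu^*$, the function $u$ lies in $L^q(\Omega)$ and hence $f$ behaves like $|u|^{q-1}$ with subcritical growth relative to the Sobolev conjugate exponent. I would then run a Moser iteration (or a De Giorgi truncation argument) adapted to the Carnot group setting: testing \eqref{subopwksol} with $v=|u|^{\beta}u$ suitably truncated (for instance $v=u_k|u_k|^{s-1}$ where $u_k=\min\{|u|,k\}\,\mathrm{sgn}(u)$), one derives a Caccioppoli-type inequality controlling $\|\nabla_{\textrm{H}}(|u|^{\gamma})\|_{L^p}$ by lower powers of $u$. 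Feeding this into the Sobolev embedding of Lemma \ref{emb} yields a recursive improvement of integrability $\|u\|_{L^{\gamma\nu^*}}\lesssim\|u\|_{L^{\gamma p}}$, and iterating over a geometric sequence of exponents together with the subcriticality $q-1<\nu^*-1$ gives an $L^\infty$ bound after finitely many (in fact infinitely many summable) steps. The standard references for this machinery in the Hörmander/Carnot setting (Capogna–Danielli–Garofalo, or the Xu–type embedding) should let me invoke the doubling property and the Sobolev–Poincaré inequality without reproving them.

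\smallskip

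For part (ii), assuming now $u\geq 0$, I would treat $u$ as a nonnegative weak supersolution of $-\dv_{\textrm{H}}(|\nabla_{\textrm{H}} u|^{p-2}\nabla_{\textrm{H}} u)\geq 0$, since the right-hand side $\lambda\|u\|_{L^q}^{p-q}u^{q-1}\geq 0$ when $\lambda>0$ and $u\geq0$. The strategy is to invoke the weak Harnack inequality for nonnegative supersolutions of the subelliptic $p$-Laplacian, which in the Carnot-Carathéodory geometry takes the form $\left(\fint_{B_{2r}}u^{s}\,dx\right)^{1/s}\leq C\,\inf_{B_r}u$ for some $s>0$, valid on balls $B_r$ with $B_{2r}\Subset\Omega$. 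A standard chaining argument along a Carnot-Carathéodory geodesic connecting points of a fixed compact set $\omega\Subset\Omega$ (using the connectivity from \cite{CH} and the doubling measure) then propagates the positivity: if $u$ were to vanish at some interior point its average over a neighborhood would force $\inf u=0$ on a chain of overlapping balls covering $\omega$, contradicting $u\not\equiv0$. This yields a uniform lower bound $u\geq c(\omega)>0$ on each $\omega\Subset\Omega$, and in particular $u>0$ throughout $\Omega$.

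\smallskip

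The main obstacle I anticipate is part (ii): unlike the Euclidean $p$-Laplacian, the weak Harnack inequality for the subelliptic $p$-sub-Laplacian is delicate and requires the full apparatus of metric-measure space analysis on Carnot groups (doubling, Poincaré inequality along horizontal curves, and the intrinsic geometry of Carnot–Carathéodory balls). I would need to either cite a precise version of the Harnack inequality for nonnegative supersolutions on Carnot groups (such as the results descending from the work of Capogna–Danielli–Garofalo on subelliptic equations) or, if no off-the-shelf statement of sufficient generality is available, adapt the Euclidean Moser/DiBenedetto proof using the subelliptic Sobolev–Poincaré inequality. The boundedness in part (i) is comparatively routine once the embedding of Lemma \ref{emb} is in hand, the only care being to track the subcritical exponent $q$ so that the iteration closes.
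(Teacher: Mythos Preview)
Your proposal is correct in outline but differs from the paper's argument, particularly for part (i).

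For (i), the paper does not run a Moser iteration with test functions of the form $|u|^\beta u$. Instead it uses a De Giorgi--Ladyzhenskaya--Ural'tseva level-set argument: after normalizing $\|u\|_{L^q(\Omega)}=1$, it tests \eqref{subopwksol} with $v=(u-k)^+$ on the super-level set $A(k)=\{u>k\}$, splits into the cases $q\le p$ and $q>p$, and in each case combines the Sobolev-type inequality of Lemma~\ref{embd} with an elementary estimate on $|A(k)|$ to obtain a decay estimate of the form $\int_{A(k)}(u-k)\,dx\le Ck\,|A(k)|^{1+\epsilon}$. The conclusion $u\in L^\infty$ then follows by a direct appeal to \cite[Lemma~5.1]{Ural}. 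Your Moser scheme would also work and is arguably more flexible if one later wants quantitative $L^\infty$ bounds in terms of $\|u\|_{L^q}$; the paper's De Giorgi approach is shorter here because the level-set truncation avoids having to justify that high powers of $u$ are admissible test functions, and the whole iteration is outsourced to a single black-box lemma.

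For (ii), the paper gives no argument at all beyond citing \cite[Theorem~5]{Vodop}. Your description via the weak Harnack inequality for nonnegative supersolutions and a chaining argument is precisely the mechanism behind that cited result, so in substance the approaches coincide; you are simply unpacking what the paper invokes as a reference.
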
 

To prove our main results above, first we prove Lemma \ref{auxlmab} below.\\
\textbf{Proof of Lemma \ref{auxlmab}:}
\begin{enumerate}
\item[$(H_1)$] Follows by the definition of $A$.

\item[$(H_2)$]  Follows by the definition of $B$.

\item[$(H_3)$] First, using Cauchy-Schwartz inequality and then by H\"older's inequality with exponents $\frac{p}{p-1}$ and $p$, we obtain
$$
\langle Av,w\rangle=\int_{\Omega}|\nabla_{\textrm{H}}v|^{p-2}\nabla_{\textrm{H}}v \nabla_{\textrm{H}}w\,dx\leq\int_{\Omega}|\nabla_{\textrm{H}}v|^{p-1}|\nabla_{\textrm{H}}w|\,dx\leq\|v\|_{U}^{p-1}\|w\|_{U}.
$$
If $v=0$ or $w=0$ then the equality $\langle Av,w\rangle=\|v\|_{U}^{p-1}\|w\|_{U}$ holds. So we assume this equality such that both $v\neq 0$ and $w\neq 0$. Then the equality in Cauchy-Schwartz and H\"older's inequality hold simultaneously. That is, at one end (due to the case of the equality in Cauchy-Schwartz inequality) we get
$$
\int_{\Omega}|\nabla_{\textrm{H}} v|^{p-2}\nabla_{\textrm{H}}v \nabla_{\textrm{H}}w\,dx=\int_{\Omega}|\nabla_{\textrm{H}}v|^{p-1}|\nabla_{\textrm{H}}w|\,dx,
$$
which gives $|\nabla_{\textrm{H}} v|^{p-2}\nabla_{\textrm{H}}v \nabla_{\textrm{H}}w=|\nabla_{\textrm{H}}v|^{p-1}|\nabla_{\textrm{H}}w|$ and hence, $\nabla_{\textrm{H}}v(x)=c(x)\nabla_{\textrm{H}}w(x)$ for almost every $x\in\Omega$ for some $c(x)\geq 0$. Also, due to the equality in H\"older's inequality, we have
$$
\int_{\Omega}|\nabla_{\textrm{H}}v|^{p-2}\nabla_{\textrm{H}}v \nabla_{\textrm{H}}w\,dx=\|v\|_{U}^{p-1}\|w\|_{U},
$$
which gives $|\nabla_{\textrm{H}}v|=t|\nabla_{\textrm{H}}w|$ in $\Omega$ for some constant $t>0$. Therefore, $c(x)=t$ in $\Omega$. Hence $\nabla_{\textrm{H}}v=t \nabla_{\textrm{H}}w$ in $\Omega$ and therefore $v=tw$ in $\Omega$ for some $t>0$. Thus, $(H_3)$ holds. 

\item[$(H_4)$] This property can be verified similarly as in $(H_3)$.

\item[$(H_5)$] We prove it by applying Theorem \ref{MB} as follows:

\vskip 0.2cm

\noindent
\textbf{Boundedness:} From H\"older's inequality, we have
$$
\|Av\|_{U^*}=\sup_{\|w\|_U\leq 1}|\langle Av,w\rangle|\leq\|v\|_U^{p-1}\|w\|_U\leq\|v\|^{p-1}_U.
$$
Thus, $A$ is bounded.

\vskip 0.2cm

\noindent
\textbf{Continuity:} Suppose $v_n\in U$ such that $v_n\to v$ in the norm of $U$. Thus, upto a subsequence $\nabla_{\textrm{H}}v_{n_j}\to \nabla_{\textrm{H}}v$ pointwise almost everywhere in $\Omega$. We observe that 
$$
\||\nabla_{\textrm{H}}v_{n_j}|^{p-2}\nabla_{\textrm{H}}v_{n_j}\|_{L^\frac{p}{p-1}(\Omega)}\leq\|\nabla_{\textrm{H}}v_{n_j}\|^{p-1}_{U}\leq C, 
$$
for some uniform constant $C>0$, which is independent of $n$. Thus, 
$$
|\nabla_{\textrm{H}}v_{n_j}|^{p-2}\nabla_{\textrm{H}}v_{n_j}\rightharpoonup |\nabla_{\textrm{H}}v|^{p-2}\nabla_{\textrm{H}}v
$$ 
weakly in $L^\frac{p}{p-1}(\Omega)$. Since, the weak limit is independent of the choice of the subsequence, it follows that
$$
|\nabla_{\textrm{H}}v_{n}|^{p-2}\nabla_{\textrm{H}}v_{n}\rightharpoonup |\nabla_{\textrm{H}}v|^{p-2}\nabla_{\textrm{H}}v
$$ 
weakly in $L^\frac{p}{p-1}(\Omega)$. As a consequence, we have 
$$
\langle Av_n,w\rangle\to\langle Av,w\rangle
$$
for every $w\in U$. Thus $A$ is continuous. Similarly, we obtain $B$ is continuous.

\noindent
\textbf{Coercivity:} We observe that 
$$
\langle Av,v\rangle=\int_{\Omega}|\nabla_{\textrm{H}}v|^p\,dx.
$$
Since $p>1$, the operator $A$ is coercive.

\noindent
\textbf{Monotonicity:} By Lemma \ref{alg} we have
$$
\langle Av-Aw,v-w\rangle=\int_{\Omega}\langle|\nabla_{\textrm{H}}v|^{p-2}\nabla_{\textrm{H}}v-|\nabla_{\textrm{H}}w|^{p-2}\nabla_{\textrm{H}}w,\nabla_{\textrm{H}}(v-w)\rangle\,dx\geq 0,
$$
for every $v,w\in U$. Thus, $A$ is monotone.

Now, by the continuous embedding of $U$ in $V$ from Lemma \ref{emb}, we observe that $B(w)\in U^*$ for every $w\in V\setminus\{0\}$. Note that by Lemma \ref{Xuthm}, it follows that $U$ is a separable and reflexive Banach space. Therefore, by Theorem \ref{MB} there exists $u\in U\setminus\{0\}$ such that
$$
\langle A(u),v\rangle=\langle B(w),v\rangle\quad\forall v\in U.
$$
Hence the hypothesis $(H_5)$ holds. This completes the proof. \qed
\end{enumerate}

The next result is useful to prove boundedness of the eigenfunctions of \eqref{subopeqn}.
\begin{lem}\label{embd}
Let $\Omega\subset\mathbb G$ be such that $|\Omega|<\infty$ and $1<p<\nu$, $1<l<\nu^{*}=\frac{\nu l}{\nu-l}$. Then for every $u\in W_0^{1,p}(\Omega)$, there exists a positive constant $C=C(p,l,\nu)$ such that
\begin{equation}\label{embeqn}
\left(\int_{\Omega}|u|^l\,dx\right)^\frac{1}{l}\leq C|\Omega|^{\frac{1}{l}-\frac{1}{p}+\frac{1}{\nu}}\left(\int_{\Omega}|\nabla_{\textrm{H}}u|^p\,dx\right)^\frac{1}{p}.
\end{equation}
\end{lem}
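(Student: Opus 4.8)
The plan is to derive \eqref{embeqn} from the scale-invariant Sobolev inequality on $\mathbb G$ combined with a single application of Hölder's inequality. Throughout I write $p^{\ast}=\frac{\nu p}{\nu-p}$, so that the hypothesis on $l$ should be read as $1<l<p^{\ast}$. First I would invoke the Folland--Stein--Sobolev inequality on the Carnot group: there is a constant $C_0=C_0(\nu,p)$, \emph{independent of} $\Omega$, such that
$$
\Big(\int_{\mathbb G}|u|^{p^{\ast}}\,dx\Big)^{\frac{1}{p^{\ast}}}\le C_0\Big(\int_{\mathbb G}|\nabla_{\textrm{H}} u|^{p}\,dx\Big)^{\frac{1}{p}}
$$
for all $u\in C_c^{\infty}(\mathbb G)$; this is exactly the content of \cite[(2.8)]{Danielli} already cited before Lemma \ref{emb}. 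For $u\in W_0^{1,p}(\Omega)$ I would extend $u$ by zero to all of $\mathbb G$ (which changes neither integral) and pass to the limit through a sequence in $C_c^{\infty}(\Omega)$, so that
$$
\|u\|_{L^{p^{\ast}}(\Omega)}\le C_0\,\|\nabla_{\textrm{H}} u\|_{L^{p}(\Omega)}.
$$
The only genuine subtlety, and what I expect to be the main point to get right, is that $C_0$ must be independent of $\Omega$: the global inequality on $\mathbb G$ is invariant under the dilations $\delta_t$, whereas the abstract continuous embedding of Lemma \ref{emb} would a priori carry an $\Omega$-dependent constant and would not expose the explicit volume factor required in \eqref{embeqn}.

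Next I would interpolate the $L^{l}$ norm against the constant function $1$. Since $1<l<p^{\ast}$, Hölder's inequality with the conjugate exponents $\frac{p^{\ast}}{l}$ and $\frac{p^{\ast}}{p^{\ast}-l}$ gives
$$
\int_{\Omega}|u|^{l}\,dx=\int_{\Omega}|u|^{l}\cdot 1\,dx\le\Big(\int_{\Omega}|u|^{p^{\ast}}\,dx\Big)^{\frac{l}{p^{\ast}}}|\Omega|^{1-\frac{l}{p^{\ast}}},
$$
and raising to the power $\frac{1}{l}$ yields
$$
\|u\|_{L^{l}(\Omega)}\le |\Omega|^{\frac{1}{l}-\frac{1}{p^{\ast}}}\,\|u\|_{L^{p^{\ast}}(\Omega)}.
$$
Here the hypothesis $|\Omega|<\infty$ is what makes $\|1\|_{L^{p^{\ast}/(p^{\ast}-l)}(\Omega)}=|\Omega|^{(p^{\ast}-l)/p^{\ast}}$ finite.

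Finally I would combine the two displays and identify the exponent. Substituting the Sobolev bound into the interpolation inequality produces
$$
\|u\|_{L^{l}(\Omega)}\le C_0\,|\Omega|^{\frac{1}{l}-\frac{1}{p^{\ast}}}\,\|\nabla_{\textrm{H}} u\|_{L^{p}(\Omega)},
$$
and since $\frac{1}{p^{\ast}}=\frac{\nu-p}{\nu p}=\frac{1}{p}-\frac{1}{\nu}$ we get
$$
\frac{1}{l}-\frac{1}{p^{\ast}}=\frac{1}{l}-\frac{1}{p}+\frac{1}{\nu},
$$
which is precisely the exponent appearing in \eqref{embeqn}. Taking $C=C_0$ (depending only on $\nu,p$, hence on $p,l,\nu$) completes the argument. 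The estimates are elementary once the scale-invariant Sobolev constant is in hand, so no step beyond that invariance should present real difficulty.
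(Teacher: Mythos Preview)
Your argument is correct and, if anything, more streamlined than the paper's. The paper follows \cite[Corollary 1.57]{Maly} and introduces an intermediate exponent $s$ (equal to $1$ or $\nu l/(\nu+l)$), applies the Sobolev inequality at that level to get $\|u\|_{L^{s^*}}\le C\|\nabla_{\textrm H}u\|_{L^s}$, and then uses H\"older's inequality \emph{twice}: once to pass from $L^{s^*}$ down to $L^l$ on $u$, and once to pass from $L^s$ up to $L^p$ on $\nabla_{\textrm H}u$; the two volume exponents $(\frac1l-\frac1s+\frac1\nu)$ and $(\frac1s-\frac1p)$ then add to the desired $\frac1l-\frac1p+\frac1\nu$. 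You instead take the Sobolev inequality directly at level $p$ and need only a single H\"older step on $u$. Both routes rely on the same scale-invariant (domain-independent) Folland--Stein constant, and yours avoids the case distinction on $s$ at the price of using the $L^{p^\ast}$ endpoint rather than a lower Sobolev exponent; in the present setting this costs nothing.
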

\begin{proof}
Proceeding as in \cite[Corollary $1.57$]{Maly}, we set 
$$
s=
\begin{cases}
1,\text{ if }l\nu\leq \nu+l\\
\frac{\nu l}{\nu+l},\text{ if }\nu l>\nu+l.
\end{cases}
$$
Then $1\leq s\leq p$, $s<\nu$ and $s^*=\frac{\nu s}{\nu-s}\geq l$. Using H\"older's inequality along with Lemma \ref{emb}, we obtain
\begin{equation}\label{embeqn3}
\begin{split}
\|u\|_{L^l(\Omega)}\leq\|u\|_{L^{s^*}(\Omega)}|\Omega|^{\frac{1}{l}-\frac{1}{s}+\frac{1}{\nu}}&\leq C\|\nabla_{\textrm{H}}u\|_{L^s(\Omega)}|\Omega|^{\frac{1}{l}-\frac{1}{s}+\frac{1}{\nu}}\\
&\leq C\|\nabla_{\textrm{H}}u\|_{L^p(\Omega)}|\Omega|^{\frac{1}{l}-\frac{1}{p}+\frac{1}{\nu}}.
\end{split}
\end{equation}
Hence the proof.
\end{proof}

\subsection{Proof of the main results:}
\textbf{Proof of Theorem \ref{newthm}:} The proof follows by Lemma \ref{auxlmab} and \cite[Theorem $1$]{Ercole}. \qed \\
\textbf{Proof of Theorem \ref{subopthm1}:} The proof follows by Lemma \ref{auxlmab} and \cite[Proposition $2$]{Ercole}. \\ 
\textbf{Proof of Theorem \ref{regthm}:}
\textbf{(i)} Due to the homogeneity of the equation \eqref{subopeqn}, without loss of generality, we assume that $\|u\|_{V}=1$. Let $k\geq 1$ and set $A(k):=\{x\in\Omega:u(x)>k\}$. Choosing $v=(u-k)^+$ as a test function in \eqref{subopwksol}, we obtain
\begin{equation}\label{regtst1}
\int_{A(k)}|\nabla_{\textrm{H}} u|^p\,dx=\lambda\int_{A(k)}|u|^{q-2}u(u-k)\,dx\leq\lambda\int_{A(k)}|u|^{q-1}(u-k)\,dx.
\end{equation}
We prove the result in the following two cases:\\
\textbf{Case $I$.} Let $q\leq p$, then since $k\geq 1$, over the set $A(k)$, we have $|u|^{q-1}\leq|u|^{p-1}$. Therefore, from \eqref{regtst1} we have
\begin{equation}\label{regtst2}
\begin{split}
\int_{A(k)}|\nabla_{\textrm{H}} u|^p\,dx&=\lambda\int_{A(k)}|u|^{q-2}u(u-k)\,dx\\
&\leq\lambda\int_{A(k)}|u|^{p-1}(u-k)\,dx\\
&\leq\lambda\int_{A(k)}(2^{p-1}(u-k)^{p}+2^{p-1}k^{p-1}(u-k))\,dx,
\end{split}
\end{equation}
where to obtain the last inequality above, we have used the inequality $(a+b)^{p-1}\leq 2^{p-1}(a^{p-1}+b^{p-1})$ for $a,b\geq 0$. Using the Sobolev inequality \eqref{embeqn} with $l=p$ in \eqref{regtst2} we obtain
\begin{equation}\label{regtst3}
\begin{split}
(1-S\lambda 2^{p-1}|A(k)|^\frac{p}{\nu})\int_{A(k)}(u-k)^p\,dx&\leq\lambda S 2^{p-1}k^{p-1}|A(k)|^\frac{p}{\nu}\int_{A(k)}(u-k)\,dx,
\end{split}
\end{equation}
where $S>0$ is the Sobolev constant. Note that $\|u\|_{L^1(\Omega)}\geq k|A(k)|$ and therefore for every $k\geq k_0=(2^p S\lambda)^\frac{\nu}{p}\|u\|_{L^1(\Omega)}$, we have $S\lambda2^{p-1}|A(k)|^\frac{p}{\nu}\leq\frac{1}{2}$. Using this fact in \eqref{regtst3}, for every
$k\geq\max\{k_0,1\}$, we get
\begin{equation}\label{regtst4}
\begin{split}
\int_{A(k)}(u-k)^p\,dx&\leq \lambda S 2^{p}k^{p-1}|A(k)|^\frac{p}{\nu}\int_{A(k)}(u-k)\,dx.
\end{split}
\end{equation}
Using H\"older's inequality and the estimate \eqref{regtst4} we obtain
\begin{equation}\label{regtst5}
\int_{A(k)}(u-k)\,dx\leq (\lambda S2^p)^\frac{1}{p-1}k|A(k)|^{1+\frac{p}{\nu(p-1)}}.
\end{equation}
Noting \eqref{regtst5}, by \cite[Lemma $5.1$]{Ural}, we get $u\in L^\infty(\Omega)$. \\
\textbf{Case $II.$} Let $q>p$, then using the inequality $(a+b)^{q-1}\leq 2^{q-1}(a^{q-1}+b^{q-1})$ for $a,b\geq 0$ in \eqref{regtst1} we get
\begin{equation}\label{regtstc2}
\int_{A(k)}|\nabla_{\textrm{H}}u|^p\,dx\leq\lambda\int_{A(k)}(2^{q-1}(u-k)^{q}+2^{q-1}k^{q-1}(u-k))\,dx.
\end{equation}
Now, using the Sobolev inequality \eqref{embeqn} with $l=q$ in the estimate \eqref{regtstc2} we obtain
\begin{equation}\label{regtstc21}
\begin{split}
\left(\int_{A(k)}(u-k)^q\,dx\right)^\frac{p}{q}&\leq S\lambda|A(k)|^{p(\frac{1}{q}-\frac{1}{p}+\frac{1}{\nu})}\int_{A(k)}(2^{q-1}(u-k)^q+2^{q-1}k^{q-1}(u-k))\,dx,
\end{split}
\end{equation}
where $S>0$ is the Sobolev constant. Since $\int_{A(k)}(u-k)^q\,dx\leq\|u\|^{q}_{L^q(\Omega)}=1$ and $q>p$, the quantity in the left side of \eqref{regtstc21} can be estimated from below as

\begin{equation}\label{regtstc22}
\left(\int_{A(k)}(u-k)^q\,dx\right)^\frac{p}{q}=\left(\int_{A(k)}(u-k)^q\,dx\right)^{\frac{p-q}{q}+1}\geq\int_{A(k)}(u-k)^q\,dx.
\end{equation}
Using \eqref{regtstc22} in \eqref{regtstc21} we get
\begin{equation}\label{regtstc23}
\begin{split}
&\Big(1-S\lambda 2^{q-1}|A(k)|^{p(\frac{1}{q}-\frac{1}{p}+\frac{1}{\nu})}\Big)\int_{A(k)}(u-k)^q\,dx\\
&\leq S\lambda2^{q-1}k^{q-1}|A(k)|^{p(\frac{1}{q}-\frac{1}{p}+\frac{1}{\nu})}\int_{A(k)}(u-k)\,dx.
\end{split}
\end{equation}
Let $\alpha={p(\frac{1}{q}-\frac{1}{p}+\frac{1}{\nu})}$, which is positive, since $1<q<\nu^{*}$. Choosing $k_1=(S\lambda2^q)^\frac{1}{\alpha}\|u\|_{L^1(\Omega)}$, due to the fact that $k|A(k)|\leq\|u\|_{L^1(\Omega)}$, we obtain for every $k\geq k_1$ that $S\lambda 2^{q-1}|A(k)|^\alpha\leq\frac{1}{2}$. Using this property in \eqref{regtstc23}, we have

\begin{equation}\label{regtstc24}
\begin{split}
\int_{A(k)}(u-k)^q\,dx&\leq S\lambda2^{q}k^{q-1}|A(k)|^\alpha\int_{A(k)}(u-k)\,dx.
\end{split}
\end{equation}
By H\"older's inequality and the estimate \eqref{regtstc24} we arrive at
\begin{equation}\label{regtstc25}
\int_{A(k)}(u-k)\,dx\leq (\lambda S2^q)^\frac{1}{q-1}k|A(k)|^{1+\frac{\alpha}{q-1}}.
\end{equation}
Noting \eqref{regtstc25}, by \cite[Lemma $5.1$]{Ural}, we get $u\in L^\infty(\Omega)$.\\
\textbf{(ii)} By \cite[Theorem $5$]{Vodop}, the result follows. \qed

\section{{Coercive operators in Banach spaces}}
In this section, we state some auxiliary results. The first one is the following algebraic inequality from \cite[Lemma $2.1$]{Dama}.

\begin{lem}\label{alg}
Let $1<p<\infty$. Then for any $a,b\in\mathbb{R}^N$, there exists a constant $C=C(p)>0$ such that
\begin{equation}\label{algineq}
\langle |a|^{p-2}a-|b|^{p-2}b, a-b \rangle\geq
C(|a|+|b|)^{p-2}|a-b|^2.
\end{equation}
\end{lem}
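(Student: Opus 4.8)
The plan is to reduce the inequality to a one-parameter integral along the segment joining $b$ to $a$ and then to bound the resulting quadratic form. Since both sides are continuous in $(a,b)$, I would first note that it suffices to prove the estimate for $a,b$ in generic position (say the segment $t\mapsto b+t(a-b)$ avoids the origin), the general case following by continuity. Writing $\xi(t)=b+t(a-b)$ and $h=a-b$, the map $t\mapsto |\xi(t)|^{p-2}\xi(t)$ is absolutely continuous on $[0,1]$, so the fundamental theorem of calculus together with the chain rule gives
$$
\langle |a|^{p-2}a-|b|^{p-2}b,\,a-b\rangle=\int_0^1\Big(|\xi(t)|^{p-2}|h|^2+(p-2)|\xi(t)|^{p-4}(\xi(t)\cdot h)^2\Big)\,dt.
$$
Everything then reduces to estimating this integrand from below, and the sign of $p-2$ dictates two cases.

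For $p\geq 2$ the cross term $(p-2)|\xi|^{p-4}(\xi\cdot h)^2$ is nonnegative and may be discarded, leaving the task of showing $\int_0^1|\xi(t)|^{p-2}\,dt\geq c(p)(|a|+|b|)^{p-2}$. Here I would assume without loss of generality that $|a|\geq|b|$, so that by the reverse triangle inequality $|\xi(t)|\geq t|a|-(1-t)|b|\geq(2t-1)|a|$ for $t\geq\tfrac12$; in particular $|\xi(t)|\geq\tfrac12|a|\geq\tfrac14(|a|+|b|)$ on $[\tfrac34,1]$. Since $p-2\geq 0$, integrating over $[\tfrac34,1]$ yields $\int_0^1|\xi|^{p-2}\,dt\geq 4^{1-p}(|a|+|b|)^{p-2}$, which gives the claim with $C(p)=4^{1-p}$.

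For $1<p<2$ the cross term is nonpositive, so I would control it by Cauchy--Schwarz: $(\xi\cdot h)^2\leq|\xi|^2|h|^2$, and since the coefficient $p-2$ is negative the integrand is bounded below by $|\xi|^{p-2}|h|^2+(p-2)|\xi|^{p-2}|h|^2=(p-1)|\xi|^{p-2}|h|^2$. Now the exponent $p-2$ is negative and $|\xi(t)|=|(1-t)b+ta|\leq|a|+|b|$, so $x\mapsto x^{p-2}$ being decreasing gives the pointwise bound $|\xi(t)|^{p-2}\geq(|a|+|b|)^{p-2}$. Integrating produces the inequality with $C(p)=p-1$. Taking $C(p)=\min\{4^{1-p},\,p-1\}$ then covers both cases.

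The main obstacle lies in the case $p\geq 2$: the segment $\xi(t)$ may pass arbitrarily close to (or through) the origin even when $|a|+|b|$ is large, so $|\xi|^{p-2}$ cannot be bounded below uniformly, and one must instead extract a definite portion of the parameter interval (here $[\tfrac34,1]$) on which $|\xi|$ stays comparable to $|a|+|b|$. A second, more technical point is the legitimacy of the integral representation when $1<p<2$ and the segment meets the origin: there $|\xi|^{p-2}\xi$ fails to be differentiable, but the singular integrand behaves like $|t-t_0|^{p-2}$, which is integrable precisely because $p>1$; I would handle this either by the preliminary reduction to generic position or by a routine perturbation $b\mapsto b+\varepsilon e$ followed by $\varepsilon\to 0$.
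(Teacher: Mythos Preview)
Your argument is correct. The integral representation along the segment, the case split at $p=2$, and the two lower bounds you extract are all standard and sound; the handling of the $p\geq 2$ case by isolating the subinterval $[\tfrac34,1]$ where $|\xi(t)|$ is comparable to $|a|+|b|$ is exactly the right move, and the $1<p<2$ case via Cauchy--Schwarz and the monotonicity of $x\mapsto x^{p-2}$ is clean.

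For comparison: the paper does not prove this lemma at all. It is simply quoted as an auxiliary result from Damascelli \cite[Lemma~2.1]{Dama}, so there is no ``paper's proof'' to match. Your self-contained argument is therefore a genuine addition rather than a reproduction, and it is the classical route to this inequality (essentially the same idea appears in Lindqvist's and Simon's treatments of the $p$-Laplacian). One minor remark: in the final line you write $C(p)=\min\{4^{1-p},\,p-1\}$, but since the two constants apply on disjoint ranges of $p$ there is no need to take a minimum; you may simply define $C(p)$ piecewise.
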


Next, we state the following result, which follows from \cite[Theorem $9.14$]{var}.
\begin{thm}\label{MB}
Let $V$ be a real separable reflexive Banach space and $V^*$ be the dual of $V$. Assume that $A:V\to V^{*}$ is a bounded, continuous, coercive and monotone operator. Then $A$ is surjective, i.e., given any $f\in V^{*}$, there exists $u\in V$ such that $A(u)=f$. If $A$ is strictly monotone, then $A$ is also injective. 
\end{thm}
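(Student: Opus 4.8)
The statement is the classical Browder--Minty surjectivity theorem for monotone operators, and the plan is to prove it by a Galerkin approximation combined with Brouwer's fixed point theorem and Minty's monotonicity trick. Fix $f\in V^*$. Since $V$ is separable, I would choose vectors $e_1,e_2,\dots$ whose finite linear combinations are dense in $V$ and set $V_n=\mathrm{span}\{e_1,\dots,e_n\}$. The first goal is to solve, for each $n$, the finite-dimensional \emph{Galerkin system}
$$
\langle A(u_n),e_j\rangle=\langle f,e_j\rangle,\qquad j=1,\dots,n,
$$
for some $u_n\in V_n$, and then to pass to the limit.

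To solve this system I would write $u_n=\sum_{i=1}^n\xi_i e_i$ and introduce the map $G_n:\mathbb{R}^n\to\mathbb{R}^n$ whose $j$-th component is $\langle A(u_n)-f,e_j\rangle$; continuity of $G_n$ follows from continuity of $A$, and by linearity $\langle G_n(\xi),\xi\rangle=\langle A(u_n)-f,u_n\rangle$ in the Euclidean pairing. Coercivity guarantees $\langle A(u_n)-f,u_n\rangle>0$ once $\|u_n\|_V$ is large enough, uniformly in $n$, so $\langle G_n(\xi),\xi\rangle>0$ on a sphere of fixed $V$-radius $R$; the standard corollary of Brouwer's fixed point theorem (applied in the norm of $V_n$) then produces a zero $u_n$ of $G_n$ inside the corresponding ball, and the same estimate yields the uniform a priori bound $\|u_n\|_V\leq R$.

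Because $\{u_n\}$ is bounded and $V$ is reflexive, a subsequence satisfies $u_{n_k}\rightharpoonup u$ in $V$. Boundedness of $A$ makes $\{A(u_{n_k})\}$ bounded in the reflexive space $V^*$, so along a further subsequence $A(u_{n_k})\rightharpoonup\chi$ in $V^*$. For fixed $j$ and $n_k\geq j$, the Galerkin identity gives $\langle A(u_{n_k}),e_j\rangle=\langle f,e_j\rangle$; letting $k\to\infty$ yields $\langle\chi,e_j\rangle=\langle f,e_j\rangle$ for all $j$, and by the density of the span of $\{e_j\}$ this forces $\chi=f$. Moreover, testing the Galerkin system against $u_{n_k}$ gives $\langle A(u_{n_k}),u_{n_k}\rangle=\langle f,u_{n_k}\rangle\to\langle f,u\rangle$.

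The \textbf{main obstacle} is that weak convergence alone does not permit passage to the limit inside the nonlinear term $A(u_{n_k})$; this is exactly what Minty's trick circumvents. For arbitrary $v\in V$, monotonicity gives $\langle A(u_{n_k})-A(v),u_{n_k}-v\rangle\geq 0$; expanding this and inserting the three limits above (using $A(u_{n_k})\rightharpoonup f$ and $u_{n_k}\rightharpoonup u$) I obtain $\langle f-A(v),u-v\rangle\geq 0$ for every $v\in V$. Choosing $v=u-tw$ with $t>0$ and $w\in V$, dividing by $t$, and letting $t\to 0^+$ with the continuity of $A$ gives $\langle f-A(u),w\rangle\geq 0$; replacing $w$ by $-w$ yields $A(u)=f$, which proves surjectivity. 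Finally, if $A$ is strictly monotone and $A(u_1)=A(u_2)$, then $\langle A(u_1)-A(u_2),u_1-u_2\rangle=0$ forces $u_1=u_2$, so $A$ is injective.
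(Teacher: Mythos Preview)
Your argument is the standard Galerkin--Brouwer--Minty proof of the Browder--Minty theorem and is correct. The paper, however, does not prove this statement at all: Theorem~\ref{MB} is listed among the auxiliary results and is simply quoted from \cite[Theorem~9.14]{var}, so there is no ``paper's own proof'' to compare against beyond that citation. What you have written is in fact essentially the proof one finds in the cited reference, so your approach is consistent with the source the authors invoke.

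One small point worth tightening: when you invoke the acute-angle corollary of Brouwer, the pairing $\langle G_n(\xi),\xi\rangle$ is the Euclidean one on $\mathbb{R}^n$, while the sphere on which you check positivity is defined by the $V$-norm of $u_n=\sum\xi_ie_i$. This is harmless because coercivity actually gives $\langle A(u_n)-f,u_n\rangle>0$ whenever $\|u_n\|_V\geq R$, and by equivalence of norms on $V_n$ this set contains a Euclidean sphere $|\xi|=\rho$ on which the standard lemma applies; but it would be cleaner to say so explicitly. Everything else---the uniform bound, the reflexivity/weak-compactness step, the identification $\chi=f$ via density, and the Minty trick with $v=u-tw$---is carried out correctly.
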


\section*{Acknowledgement} The authors thank Professor Giovanni Franzina for some fruitful discussion on the topic.

\vskip 0.3cm

Department of Mathematics, Ben-Gurion University of the Negev, P.O.Box 653, Beer Sheva, 8410501, Israel

\emph{E-mail address:} \email{pgarain92@gmail.com}

\vskip 0.3cm

Department of Mathematics, Ben-Gurion University of the Negev, P.O.Box 653, Beer Sheva, 8410501, Israel

\emph{E-mail address:} \email{ukhlov@math.bgu.ac.il}

\end{document}